\newtheorem{thm}{Theorem}[section]
\newtheorem{lemma}[thm]{Lemma}
\newtheorem{prop}[thm]{Proposition}
\newtheorem{coroll}[thm]{Corollary}
\newtheorem{defi}[thm]{Definition}
\theoremstyle{remark}
\newtheorem{rmk}[thm]{Remark}
\newcommand{\E}{\mathop{{}\mathbb{E}}}
\newcommand{\cF}{\mathscr{F}}
\newcommand{\cL}{\mathscr{L}}
\renewcommand{\P}{\mathbb{P}}
\newcommand{\erre}{\mathbb{R}}
\newcommand{\enne}{\mathbb{N}}
\newcommand{\longto}{\longrightarrow}
\renewcommand{\div}{\mathop{\mathrm{div}}}
\DeclarePairedDelimiter\abs{\lvert}{\rvert}
\DeclarePairedDelimiter\norm{\lVert}{\rVert}
\DeclarePairedDelimiterX\ip[2]{\langle}{\rangle}{#1,#2}
\numberwithin{equation}{section}
\title{A note on doubly nonlinear SPDEs\\ with singular drift in
  divergence form} 
\author{
  Carlo Marinelli and Luca Scarpa\\[4pt]
  {\footnotesize \emph{Department of Mathematics, University College
      London, United Kingdom}} }
\date{\normalsize December 15, 2017}
\begin{document}
\maketitle

\begin{abstract}
  We prove well-posedness for a class of second-order SPDEs with
  multiplicative Wiener noise and doubly nonlinear drift of the form
  $-\div \gamma(\nabla \cdot) + \beta(\cdot)$, where $\gamma$ is the
  subdifferential of a convex function on $\erre^d$ and $\beta$ is a
  maximal monotone graph everywhere defined on $\erre$, on which
  neither growth nor continuity assumptions are imposed.
\end{abstract}

\section{Introduction}
Let $D$ be a bounded domain of $\erre^d$ with smooth boundary and
$T>0$ a fixed number. We shall establish well-posedness in the strong
sense for stochastic partial differential equations of the type
\begin{equation}
  \label{eq:0}
  \begin{cases}
    du(t) - \div\gamma(\nabla u(t))\,dt + \beta(u(t))\,dt \ni B(t,u(t))\,dW(t)
    \quad &\text{in } (0,T)\times D,\\
  u=0 \quad &\text{in } (0,T)\times\partial D,\\
  u(0)=u_0 \quad &\text{in } D,
  \end{cases}
\end{equation}
where $\gamma \subset \erre^d \times \erre^d$ and
$\beta \subset \erre \times \erre$ are everywhere-defined maximal
monotone graphs, the first one of which is assumed to be the
subdifferential of a convex function $k:\erre^d \to
\erre$. Furthermore, $W$ is a cylindrical Wiener process on a
separable Hilbert space $U$, and $B$ takes values in the space of
Hilbert-Schmidt operators from $U$ to $L^2(D)$.  Precise assumptions
on the data of the problem are given in {\S}\ref{sec:ass} below.

Equations with drift in divergence type, both in deterministic and
stochastic settings, have a long history and are thoroughly studied,
especially because of their physical significance. From a mathematical
point of view, they are particularly interesting because they are
fully nonlinear, in the sense that they do not contain any ``leading''
linear term. For stochastic equations, the first well-posedness result
is most likely due to Pardoux, as an application of his general
results in \cite{Pard} on monotone stochastic evolution equations in
the variational setting (see also \cite{KR-spde} for improved results
under more general assumptions on $B$). In this case one needs to
assume $\beta=0$ and
\[
  \gamma(x) \cdot x \gtrsim \abs{x}^p - 1,
  \qquad \abs{\gamma(x)} \lesssim \abs{x}^{p-1} - 1
  \qquad \forall x \in \erre^d,
\]
with $p>1$ (the centered dot stands for the usual Euclidean scalar
product in $\erre^d$). These are precisely the classical Leray-Lions
conditions, well known in the deterministic theory
(cf.~\cite{Lions:q}).  In some special cases a simple polynomial-type
$\beta$ can be added: for instance, if $\gamma$ corresponds to the
$p$-Laplacian, i.e.  $\gamma(x) = \abs{x}^{p-2}x$, $p \geq 2$, one may
consider $\beta(x)=\abs{x}^{p-2}x$ (cf.~\cite[p.~83]{LiuRo}). However,
it is well known that if two nonlinear operators satisfy the
conditions needed in the variational setting, their sum in general
does not. This phenomenon already gives rise to severe restrictions on
the class of semilinear equations with polynomial nonlinearities that
can be solved bu such methods.

In some recent works we have obtained well-posedness results for
\eqref{eq:0} under much more general hypotheses than those mentioned
above. In particular, in \cite{luca} we assume that $\gamma$ still
satisfies the classical Leray-Lions assumptions, but we impose no
growth restriction on $\beta$: a very mild symmetry-like condition on
its behavior at infinity is shown to suffice. On the other hand, in
\cite{cm:div} we consider the case $\beta=0$, with no hypotheses on
the growth of $\gamma$, but with the additional requirement that
$\gamma$ is single-valued (a symmetry-like assumption on $\gamma$ is
needed in this case as well). Equations with more general, possibly
multivalued $\gamma$, are treated in \cite{cm:div2}, where, however,
less regular solutions are obtained.

Our goal is to unify and extend the above-mentioned well-posedness
results for equation \eqref{eq:0}, thus treating the case where both
$\gamma$ and $\beta$ can be multivalued, without any restriction on
their rate of growth. We shall also show that we can do so without
loosing any regularity of solutions with respect to the results of
\cite{cm:div}. The approach we take, initiated in \cite{cm:luca} and
further refined and extended in \cite{cm:inv}--\cite{cm:div2},
consists in a combination of (deterministic and stochastic)
variational techniques and weak compactness in $L^1$ spaces. A key
feature is the construction of a candidate solution as pathwise limit,
in suitable topologies, of solutions to regularized equations. In
particular, due to this type of construction, in order to obtain
measurability properties of solutions, uniqueness of limits is
crucial. Roughly speaking, we can prove that
$-\div\gamma(\nabla u) + \beta(u)$ is unique, hence that it is
measurable, but showing that each one of them is unique (hence
measurable) seems difficult, if not impossible. This is the reason why
$\gamma$ was assumed to be single-valued in \cite{cm:div,luca}. In the
general setting of this work we thus need different ideas: let
$u_\lambda$, $\gamma_\lambda$, and $\beta_\lambda$ be suitable
regularizations of $u$, $\gamma$, and $\beta$, respectively, and set
$\eta_\lambda:=\gamma_\lambda(\nabla u_\lambda)$ and
$\xi_\lambda:=\beta_\lambda(u_\lambda)$. Comparing weak limits,
obtained in different ways, of the image of the pair
$(\eta_\lambda,\xi_\lambda)$ under a continuous linear map, we are
going to prove that there exist two limiting processes $\eta$ and
$\xi$, ``sections'' of $\gamma(\nabla u)$ and $\beta(u)$,
respectively, that are indeed predictable and satisfy suitable
uniqueness properties. One may say that we restore uniqueness working
in a suitable quotient space, although quotient spaces do not appear
explicitly.

The well-posedness result obtained here may be interesting also in the
deterministic setting, as our results extend to the doubly nonlinear
case the sharpest results available for equations with $\beta=0$ and
$B=0$, whose hypotheses on $\gamma$ are identical to ours
(cf.~\cite[p.~207-ff]{Barbu:type})

The paper is organized as follows: in Section~\ref{sec:ass} we state
the assumptions and the main result, which is then proved in
Section~\ref{sec:dim}.
\smallskip\par\noindent
\textbf{Acknowledgments.} Part of the work for this paper was done
while the authors were supported by a grant of the Royal Society. The
first-named author gratefully acknowledges the hospitality of the IZKS
at the University of Bonn.

\section{Main result}
\label{sec:ass}
Before stating the main result, we fix notation and introduce the
necessary assumptions.

As already mentioned, $D$ stands for a bounded domain in $\erre^d$
with smooth boundary. We shall denote the Hilbert space $L^2(D)$ by
$H$, its norm and scalar product by $\norm{\cdot}$ and
$\ip{\cdot}{\cdot}$, respectively.
We shall denote the Dirichlet Laplacian on $L^1(D)$ (as well as on
$L^2(D)$, without notationally distinguish them) by $\Delta$.
The space of Hilbert-Schmidt operators from the separable Hilbert
space $U$ to $H$ is denoted by $\cL^2(U,H)$.
We shall write $a \lesssim b$ to mean that there exists a constant $N>0$
such that $a \leq Nb$.

\smallskip

Let $(\Omega, \cF, \P)$ be a probability space, endowed with a
filtration $(\cF_t)_{t\in[0,T]}$ satisfying the so-called usual
conditions, on which all random elements will be defined. Equality of
stochastic processes is meant to be in the sense of
indistinguishability, unless otherwise stated.
We assume that the diffusion coefficient
\[
  B: \Omega \times [0,T] \times H \to \cL^2(U, H)
\]
is such that $B(\cdot,\cdot,h)$ is progressively measurable for all
$h \in H$, and there exists a positive constant $N_B$ such that
\begin{gather*}
  \norm[\big]{B(\omega,t,x)}_{\cL^2(U,H)} \leq N_B\bigl( 1+\norm{x} \bigr),\\
  \norm[\big]{B(\omega,t,x) - B(\omega,t,y)}_{\cL^2(U,H)} \leq N_B\norm{x-y}
\end{gather*}
for all $(\omega,t) \in \Omega \times [0,T]$ and $x,y \in
H$. Moreover, let the initial datum $u_0$ be $\cF_0$-measurable with
finite second moment, i.e.  $u_0 \in L^2(\Omega,\cF_0;H)$.

\smallskip

Let $k:\erre^d \to \erre_+$ be a convex function with $k(0)=0$ such that
\[
  \limsup_{|x|\to+\infty} \frac{k(x)}{k(-x)} < +\infty,
  \qquad
  \lim_{|x|\to +\infty} \frac{k(x)}{|x|} = +\infty
\]
(we shall call the second condition superlinearity at infinity). Then
its subdifferential $\gamma:=\partial k$ is a maximal monotone graph
in $\erre^d \times \erre^d$. We assume that the domain of $\gamma$
coincides with $\erre^d$, which implies that $k^*$, the convex
conjugate of $k$, is superlinear at infinity as well.
Moreover, let $j:\erre \to \erre_+$ be a further convex function with
$j(0)=0$ such that
\[
  \limsup_{|x|\to+\infty}\frac{j(x)}{j(-x)} < +\infty,
\]
whose subdifferential $\beta:=\partial j$ is an everywhere defined
maximal monotone graph in $\erre \times \erre$, so that $j^*$ is
superlinear at infinity. All notions of convex analysis and from the
theory of maximal monotone operators used thus far and in the sequel
are standard and are treated in detail, for instance, in
\cite{Barbu:type}.

\medskip

We can now give the notion of solution to \eqref{eq:0} that we are
going to work with. Throughout the work, $V_0$ is a separable Hilbert
space continuously embedded in both $W^{1,\infty}(D)$ and $H^1_0(D)$:
for instance one can take, thanks to Sobolev embedding theorems,
$V_0:=H^k_0(D)$ for $k \in \enne$ sufficiently large.
Moreover, the divergence operator is defined as
\begin{align*}
  \div: L^1(D)^d &\longto V_0'\\
  f &\longmapsto \bigl[ g \mapsto - \ip{f}{\nabla g} \bigr],
\end{align*}
which is thus linear and bounded. In fact, for any
$f \in L^1(D)^d$ and $g \in V_0$,
\[
  \abs[\big]{\ip{f}{\nabla g}}
  \leq \norm[\big]{f}_{L^1(D)} \norm[\big]{g}_{W^{1,\infty}}
  \lesssim \norm[\big]{f}_{L^1(D)} \norm[\big]{g}_{V_0}
\]
because $V_0$ is continuously embedded in $W^{1,\infty}$.
\begin{defi}
  A strong solution to \eqref{eq:0} is a triplet $(u,\eta,\xi)$, where
  $u$, $\eta$, and $\xi$ are adapted processes taking values in
  $W^{1,1}_0(D) \cap H$, $L^1(D)^d$, and $L^1(D)$, respectively, such
  that $\eta \in \gamma(\nabla u)$ and $\xi \in \beta(u)$ a.e. in
  $\Omega \times (0,T) \times D$,
  \begin{align*}
    u &\in L^0(\Omega; C([0,T]; H)) \cap L^0(\Omega; L^1(0,T; W^{1,1}_0(D))),\\
    \eta &\in L^0(\Omega; L^1((0,T)\times D)^d),\\
    \xi &\in L^0(\Omega; L^1((0,T)\times D)),\\
    \nabla u \cdot \eta + u \xi &\in L^0(\Omega;L^1((0,T) \times D)),
  \end{align*}
  and
  \[
    \ip[\big]{u}{\phi} + \int_0^\cdot \ip[\big]{\eta(s)}{\nabla\phi}\,ds
    + \int_0^\cdot \ip[\big]{\xi(s)}{\phi}\,ds = \ip{u_0}{\phi}
    + \ip[\bigg]{\int_0^\cdot B(s,u(s))\,dW(s)}{\phi}
  \]
  for all $\phi \in V_0$.
\end{defi}
\noindent The last identity in the above definition is equivalent to the
validity in the dual of $V_0$ of the equality
\[
  u - \int_0^\cdot \div\eta(s)\,ds
  + \int_0^\cdot \xi(s)\,ds = u_0
  + \int_0^\cdot B(s,u(s))\,dW(s).
\]
Note that $u$, $u_0$ and the stochastic integrals take values in $H$
and the third term on the left-hand side takes values in $L^1(D)$,
hence also the second term on the right-hand side belongs to $L^1(D)$,
so that the equality holds also in $L^1(D)$. The same reasoning
implies that the sum of the second and third terms on the left-hand
side take values in $H$, so that the above equality can also be seen
as valid in $H$.

The main result of the paper is the following. The proof is given in
{\S}\ref{sec:dim} below.
\begin{thm}
  \label{thm:main}
  There exists a strong solution $(u,\eta,\xi)$ to equation
  \eqref{eq:0}. It is predictable and satisfies the following
  properties:
  \begin{align*}
    u &\in L^2(\Omega; C([0,T]; H))\cap L^1(\Omega; L^1(0,T; W^{1,1}_0(D))),\\
    \eta &\in L^1(\Omega\times(0,T)\times D)^d,\\
    \xi &\in L^1(\Omega\times(0,T)\times D),\\
    \nabla u \cdot \eta &\in L^1(\Omega\times(0,T)\times D),\\    
    u \xi &\in L^1(\Omega\times(0,T)\times D).
  \end{align*}
  Moreover, the solution map
  \begin{align*}
    L^2(\Omega,\cF_0; H) &\longto L^2(\Omega; C([0,T]; H))\\
    u_0 &\longmapsto u
  \end{align*}
  is Lipschitz-continuous. 
  In particular, if $(u_1,\eta_1, \xi_1)$ and $(u_2,\eta_2,\xi_2)$
  are any two strong solutions satisfying the properties above,
  then $u_1=u_2$ and $-\div\eta_1+\xi_1=-\div\eta_2+\xi_2$
  in $L^2(\Omega; C([0,T]; H))$ and $L^1(\Omega; L^1(0,T; V_0'))$,
  respectively.
\end{thm}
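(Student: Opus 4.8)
The plan is to construct the solution as a pathwise limit of solutions to regularized equations, following the approach outlined in the introduction. First I would replace $\gamma$ and $\beta$ by their Yosida approximations $\gamma_\lambda$ and $\beta_\lambda$, which are Lipschitz and monotone, and replace the (multivalued, singular) drift by the single-valued operator $h \mapsto -\div\gamma_\lambda(\nabla h) + \beta_\lambda(h)$. For each $\lambda>0$ the resulting equation falls within the classical variational framework (the Leray–Lions-type coercivity being supplied by the superlinearity of $k$ and $j$, here used only qualitatively to get $L^1$ bounds), so well-posedness of the approximating problems follows from standard results of Pardoux/Krylov–Rozovskii type, yielding processes $u_\lambda \in L^2(\Omega;C([0,T];H))$ together with $\eta_\lambda := \gamma_\lambda(\nabla u_\lambda)$ and $\xi_\lambda := \beta_\lambda(u_\lambda)$.

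Next I would derive a priori estimates uniform in $\lambda$. Applying Itô's formula to $\tfrac12\norm{u_\lambda}^2$, using the monotonicity of $\gamma_\lambda,\beta_\lambda$ and the linear growth of $B$, together with the Burkholder–Davis–Gundy inequality and Gronwall's lemma, gives a uniform bound for $u_\lambda$ in $L^2(\Omega;C([0,T];H))$. The same computation controls $\int_{\Omega\times(0,T)\times D}(\nabla u_\lambda\cdot\eta_\lambda + u_\lambda\xi_\lambda)$ in $L^1$. The superlinearity of $k,k^*,j,j^*$ then upgrades these to uniform bounds for $\nabla u_\lambda$ in $L^1$, and, via de la Vallée-Poussin, for $\eta_\lambda$ and $\xi_\lambda$ in uniformly integrable subsets of $L^1(\Omega\times(0,T)\times D)$; similarly the products $\nabla u_\lambda\cdot\eta_\lambda$ and $u_\lambda\xi_\lambda$ are uniformly integrable. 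One then extracts, along a subsequence and working $\omega$ by $\omega$ (so as to handle the mere $L^1$ compactness pathwise), weak limits $u$, $\eta$, $\xi$ in the appropriate spaces, with $u$ also a weak-$*$ limit in $C([0,T];H)$; passing to the limit in the weak formulation identifies $(u,\eta,\xi)$ as a candidate solution.

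The main obstacle — and the point where the genuinely new ideas enter — is the identification of the limits, i.e.\ showing $\eta\in\gamma(\nabla u)$ and $\xi\in\beta(u)$ a.e., given that only weak $L^1$ convergence is available and $\gamma,\beta$ may be multivalued. The standard Minty–Browder trick requires testing against $\nabla u_\lambda\cdot\eta_\lambda + u_\lambda\xi_\lambda$, but lower semicontinuity under weak $L^1$ convergence only yields $\liminf$ of this quantity is $\ge$ the limiting value, which is the wrong direction. I would instead exploit the variational structure: using the Itô formula in the limit for $\tfrac12\norm{u}^2$ (justified since $u-u_0-\int B\,dW$ has enough regularity), one obtains an energy identity that pins down $\limsup_\lambda \E\int(\nabla u_\lambda\cdot\eta_\lambda + u_\lambda\xi_\lambda)$ from above by $\E\int(\nabla u\cdot\eta + u\xi)$; combined with the convexity inequalities $k(\nabla u_\lambda)+k^*(\eta_\lambda) = \nabla u_\lambda\cdot\eta_\lambda$ and their analogues for $j$, plus weak lower semicontinuity of the convex integrals $\int k(\nabla u)$, $\int k^*(\eta)$, etc., this forces $\nabla u\cdot\eta = k(\nabla u)+k^*(\eta)$ and $u\xi = j(u)+j^*(\xi)$ a.e., which by the Fenchel equality characterization is exactly $\eta\in\partial k(\nabla u)=\gamma(\nabla u)$ and $\xi\in\partial j(u)=\beta(u)$.

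Finally, predictability and the Lipschitz estimate for the solution map are obtained together. Because the construction produces $u$ as an a.s.\ limit of predictable processes it is itself predictable, and one recovers predictability of $\eta,\xi$ (not just of $-\div\eta+\xi$) thanks to the uniqueness properties established in the identification step, essentially by a quotient-space argument: any two limit points must give the same $\nabla u\cdot\eta + u\xi$ and the same $-\div\eta+\xi$, and the convex-duality characterization then makes the selections $\eta,\xi$ themselves determined up to indistinguishability. For the contraction estimate I would take two solutions with initial data $u_0^1,u_0^2$, apply Itô's formula to $\tfrac12\norm{u^1-u^2}^2$, use monotonicity of $\gamma$ and $\beta$ to discard the drift contributions, control the noise term by its Lipschitz bound, and close with BDG and Gronwall to get $\E\sup_{[0,T]}\norm{u^1-u^2}^2 \lesssim \E\norm{u_0^1-u_0^2}^2$; taking $u_0^1=u_0^2$ yields $u_1=u_2$ and hence, from the equation, $-\div\eta_1+\xi_1 = -\div\eta_2+\xi_2$.
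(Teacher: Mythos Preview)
Your overall strategy matches the paper's, but the crucial step --- obtaining \emph{predictable} processes $\eta$ and $\xi$ --- contains a genuine gap. You write that ``the convex-duality characterization then makes the selections $\eta,\xi$ themselves determined up to indistinguishability.'' This is not true in general, and the paper neither claims nor proves it: the theorem asserts uniqueness only of $-\div\eta+\xi$, not of $\eta$ and $\xi$ separately. Indeed, if $\nabla u=0$ on a set of positive measure and $\gamma(0)$ is not a singleton, any measurable selection $\eta\in\gamma(0)$ there satisfies the Fenchel equality $k(0)+k^*(\eta)=0\cdot\eta$, so the identification step does not pin $\eta$ down. Since the pathwise ($\omega$-by-$\omega$) weak limits are taken along subsequences that depend on $\omega$, you cannot conclude predictability from them without uniqueness, and your proposed route for this fails.

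The paper's actual mechanism is different and is the main new idea: having established uniform integrability of $(\eta_\lambda)$ and $(\xi_\lambda)$ on the \emph{full} product space $\Omega\times(0,T)\times D$, one extracts along a further subsequence $\mu$ weak limits $(\bar\eta,\bar\xi)$ in $L^1(\Omega\times(0,T)\times D)^{d+1}$; these are predictable by Mazur's lemma. They need not coincide with the $\omega$-wise limits $(\eta,\xi)$, but one shows $-\div\bar\eta+\bar\xi=-\div\eta+\xi$ by comparing weak limits of $\Phi(\eta_\lambda,\xi_\lambda)$ in $L^1(\Omega\times(0,T);V_0')$, and then simply replaces $(\eta,\xi)$ by $(\bar\eta,\bar\xi)$ in the equation. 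A secondary technical point: for the regularized problem to fall within the classical variational framework you need $H^1_0$-coercivity, which superlinearity of $k$ does \emph{not} supply; the paper adds a vanishing viscosity term $-\lambda\Delta u_\lambda$ to the approximating equation for precisely this reason.
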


\section{Proof of Theorem~\ref{thm:main}}
\label{sec:dim}

\subsection{It\^o's formula for the square of the $H$-norm}
We establish a version of It\^o's formula for the square of the
$H$-norm in a generalized variational setting, which will play an
important role in the sequel. The result is interesting in its own
right, as it does not follow from the classical ones in
\cite{KR-spde,Pard}, and is apparently new for It\^o processes
containing a drift term in divergence form with minimal integrability
properties.
\begin{prop}\label{prop:Ito}
  Let $Y$, $f$, and $g$ be measurable adapted processes with values in
  $H \cap W^{1,1}_0(D)$, $L^1(D)^d$, and $L^1(D)$, respectively, such
  that
  \begin{align*}
    Y &\in L^0(\Omega;L^\infty(0,T;H)) \cap L^0(\Omega;L^1(0,T;W^{1,1}_0(D))),\\
    f &\in L^0(\Omega;L^1((0,T)\times D)^d),\\
    g &\in L^0(\Omega; L^1((0,T)\times D)),
  \end{align*}
  and there exists constants $a$, $b>0$ such that
  \[
    k(a\nabla u) + k^*(a f) + j(bu) + j^*(bg)
    \in L^0(\Omega;L^1((0,T) \times D)).
  \]
  Moreover, let $Y_0 \in L^0(\Omega,\cF_0; H)$ and $G$ be an
  $\cL^2(U,H)$-valued progressively measurable process such that
  $G \in L^0(\Omega;L^2(0,T;\cL^2(U,H)))$. If 
  \[
    Y - \int_0^\cdot \div f(s)\,ds + \int_0^\cdot g(s)\,ds
    = Y_0 + \int_0^\cdot G(s)\,dW(s)
  \]
  as an identity in $V_0'$, then
  \begin{align*}
    &\frac12\norm{Y}^2 + \int_0^\cdot\!\!\int_D f(s) \cdot \nabla Y(s)\,ds
      + \int_0^\cdot\!\!\int_D g(s)Y(s)\,ds \\
    &\hspace{3em} = \frac12\norm{Y_0}^2 +
      \frac12 \int_0^\cdot \norm{G(s)}^2_{\cL^2(U,H)}\,ds
      + \int_0^\cdot Y(s)G(s)\,dW(s).
  \end{align*}
\end{prop}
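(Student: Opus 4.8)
The plan is to regularize $Y$ by a smoothing operator commuting with Bochner and stochastic integration, apply the classical It\^o formula for the square of the $H$-norm to the regularized process (which is a genuine $H$-valued semimartingale with absolutely continuous drift), and then pass to the limit; the delicate point will be the two ``energy'' terms $\int_0^\cdot\!\int_D f\cdot\nabla Y$ and $\int_0^\cdot\!\int_D gY$, whose convergence is forced precisely by the Fenchel--Young integrability hypothesis on $k,k^*,j,j^*$.

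Concretely, I would set $J_\varepsilon:=(I-\varepsilon\Delta)^{-m}$, with $\Delta$ the Dirichlet Laplacian and $m\in\enne$ so large that $J_\varepsilon$ maps $V_0'$ boundedly into $H$ and $H$ boundedly into $V_0$ (possible since $V_0=H^k_0(D)$ and $J_\varepsilon$ gains $2m$ derivatives). Recall that $J_\varepsilon$ is self-adjoint, a contraction on $H$ and on $\cL^2(U,H)$, positivity preserving and sub-Markovian, and that $J_\varepsilon\to I$ strongly on $H$, on $\cL^2(U,H)$ and on $L^1(D)$ as $\varepsilon\to 0^+$. Applying $J_\varepsilon$ to the identity satisfied by $Y$ yields, in $H$,
\[
  J_\varepsilon Y - \int_0^\cdot J_\varepsilon\div f(s)\,ds + \int_0^\cdot J_\varepsilon g(s)\,ds
  = J_\varepsilon Y_0 + \int_0^\cdot J_\varepsilon G(s)\,dW(s),
\]
and, for each fixed $\varepsilon>0$, one has $J_\varepsilon\div f-J_\varepsilon g\in L^1(0,T;H)$ and $J_\varepsilon G\in L^2(0,T;\cL^2(U,H))$ almost surely. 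Hence $Y_\varepsilon:=J_\varepsilon Y\in C([0,T];H)$ is an $H$-valued It\^o process to which the classical It\^o formula for $\norm{\cdot}^2$ applies; rewriting $\ip{Y_\varepsilon}{J_\varepsilon\div f-J_\varepsilon g}=\ip{J_\varepsilon^2 Y}{\div f-g}_{V_0,V_0'}=-\ip{f}{\nabla(J_\varepsilon^2 Y)}-\ip{g}{J_\varepsilon^2 Y}$ (using self-adjointness of $J_\varepsilon$, the inclusion $J_\varepsilon^2 Y\in V_0$, and the definition of $\div$) this reads
\begin{align*}
  &\tfrac12\norm{Y_\varepsilon(t)}^2
  + \int_0^t\!\!\int_D f\cdot\nabla\bigl(J_\varepsilon^2 Y\bigr)\,ds
  + \int_0^t\!\!\int_D g\,\bigl(J_\varepsilon^2 Y\bigr)\,ds\\
  &\hspace{3em}= \tfrac12\norm{J_\varepsilon Y_0}^2
  + \tfrac12\int_0^t\norm{J_\varepsilon G(s)}_{\cL^2(U,H)}^2\,ds
  + \int_0^t \bigl(J_\varepsilon Y\bigr)\bigl(J_\varepsilon G\bigr)\,dW(s).
\end{align*}

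Then I would let $\varepsilon\to 0^+$. The three terms on the right-hand side converge a.s.\ to their unregularized counterparts: $J_\varepsilon Y_0\to Y_0$ in $H$; $\norm{J_\varepsilon G}_{\cL^2}\to\norm{G}_{\cL^2}$ pointwise with uniform bound $\norm{G}_{\cL^2}\in L^2(0,T)$, so dominated convergence handles the quadratic-variation term; and the stochastic integral converges in probability, uniformly in $t$, since $\int_0^T\norm{(J_\varepsilon G)^*(J_\varepsilon Y)-G^*Y}_U^2\,ds\to 0$ in probability by contractivity of $J_\varepsilon$, the convergence $J_\varepsilon Y\to Y$ in $L^\infty(0,T;H)$, and dominated convergence with dominating function $4\norm{Y}_{L^\infty(0,T;H)}^2\norm{G}_{\cL^2}^2$. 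Moreover $\norm{J_\varepsilon Y(t)}^2\to\norm{Y(t)}^2$ for a.e.\ $t$, as $Y(t)\in H$ there. The main obstacle is the convergence of the two energy terms: one has $J_\varepsilon^2 Y\to Y$ in $H$ and $\nabla(J_\varepsilon^2 Y)=J_\varepsilon^2\nabla Y\to\nabla Y$ in $L^1((0,T)\times D)$ (using that the regularization commutes with the gradient and $J_\varepsilon\to I$ strongly on $L^1$), so along a subsequence both converge a.e.; to upgrade this to $L^1$ convergence of the products $f\cdot\nabla(J_\varepsilon^2 Y)$ and $g\,(J_\varepsilon^2 Y)$ it suffices, by Vitali's theorem, to prove their uniform integrability, and this is exactly what the hypothesis delivers. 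Indeed, by Young's inequality and the symmetry-like conditions on $k$ and $j$,
\[
  \abs[\big]{f\cdot\nabla(J_\varepsilon^2 Y)}\lesssim k^*(af)+k\bigl(c\nabla(J_\varepsilon^2 Y)\bigr),
  \qquad
  \abs[\big]{g\,(J_\varepsilon^2 Y)}\lesssim j^*(bg)+j\bigl(c'J_\varepsilon^2 Y\bigr),
\]
where $k^*(af)$ and $j^*(bg)$ are fixed and integrable by assumption, while Jensen's inequality — valid since $J_\varepsilon^2$ is sub-Markovian and $k(0)=j(0)=0$ — gives $k(c\nabla(J_\varepsilon^2 Y))\le J_\varepsilon^2\bigl(k(c\nabla Y)\bigr)$ and $j(c'J_\varepsilon^2 Y)\le J_\varepsilon^2\bigl(j(c'Y)\bigr)$; since $J_\varepsilon^2 h\to h$ in $L^1$ for any fixed $h\in L^1$, the right-hand sides form uniformly integrable families, whence the claim. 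Passing to the limit along this subsequence yields the asserted identity for a.e.\ $t\in[0,T]$, almost surely.

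Finally I would remove the restriction to a.e.\ $t$: the right-hand side and the two energy terms on the left are all continuous in $t$, so $t\mapsto\norm{Y(t)}^2$ has a continuous modification, and since $Y$ is weakly continuous with values in $H$ (being the sum of a process with continuous $V_0'$-valued paths and one with continuous $H$-valued paths, while bounded in $H$), it follows that $Y\in C([0,T];H)$ and that the identity holds for every $t\in[0,T]$, almost surely. The technically most demanding step is the one isolated above — the uniform integrability of the energy integrands along the regularization, which is where the Fenchel--Young structure of the hypotheses is essential — together with the point, to be handled in the precise choice of $J_\varepsilon$, that the resolvent-type regularization commutes with the gradient and supports Jensen's inequality.
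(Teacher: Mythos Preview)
Your approach is essentially the one taken in the paper: regularize by powers of the Dirichlet resolvent $(I-\delta\Delta)^{-m}$, apply the classical It\^o formula to the resulting $H$-valued semimartingale, and pass to the limit using Vitali's theorem, the uniform integrability of the energy integrands being forced by the Fenchel--Young hypothesis on $k,k^*,j,j^*$ together with a Jensen-type inequality for the sub-Markovian regularization. The only cosmetic difference is that you push both copies of the resolvent onto $Y$ (obtaining $f\cdot\nabla(J_\varepsilon^2 Y)$ and $g\,J_\varepsilon^2 Y$), whereas the paper distributes one on each factor (writing $f^\delta\cdot\nabla Y^\delta$ and $g^\delta Y^\delta$); both are legitimate ways to unfold the self-adjointness of $J_\varepsilon$.

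One caveat on the point you yourself flag at the end: the identity $\nabla J_\varepsilon = J_\varepsilon\nabla$ is \emph{false} for the Dirichlet resolvent on a bounded domain (the tangential derivative of $(I-\varepsilon\Delta)^{-1}v$ need not vanish on $\partial D$, so $\partial_i J_\varepsilon v$ solves the right PDE but with the wrong boundary condition). Hence neither the convergence $\nabla(J_\varepsilon^2 Y)\to\nabla Y$ in $L^1$ nor the Jensen bound $k(c\nabla J_\varepsilon^2 Y)\le J_\varepsilon^2 k(c\nabla Y)$ can be obtained by commutation. What one actually needs --- and what the paper defers to the cited works \cite{cm:ref,cm:div} --- is that $J_\varepsilon\to I$ strongly on $W^{1,1}_0(D)$ and that $\int_D k(a\nabla J_\varepsilon v)\le \int_D k(a\nabla v)$ (or a uniformly integrable majorant) holds directly for $v\in W^{1,1}_0(D)$; these facts are established there by arguments specific to the Dirichlet resolvent that do not rely on commutation. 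With that substitution your sketch goes through and matches the paper's.
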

\begin{proof}
  The proof is essentially a combination of arguments described in
  great detail in \cite{cm:ref,cm:div}, hence we shall limit ourselves
  to a sketch only.
  Using a superscript $\delta$ to denote the action of
  $(I-\delta\Delta)^{-m}$, for a sufficiently large $m\in\enne$, we
  have, thanks to Sobolev embedding theorems and classical elliptic
  regularity results,
  \[
    Y^\delta - \int_0^\cdot \div f^\delta(s)\,ds + \int_0^\cdot g^\delta(s)\,ds
    = Y^\delta_0 + \int_0^\cdot G^\delta(s)\,dW(s)
  \]
  as an identity of $H$-valued processes. It\^o's formula for
  Hilbert-space valued continuous semimartingales thus yields
  \begin{equation}
    \label{eq:delta}
    \begin{split}
    &\frac12 \norm{Y^\delta}^2
      + \int_0^\cdot\!\!\int_D f^\delta(s) \cdot \nabla Y^\delta(s)\,ds
      + \int_0^\cdot \!\!\int_D g^\delta(s)Y^\delta(s)\,ds\\
    &\hspace{3em} = \frac12\norm{Y^\delta_0}^2
      + \frac12\int_0^\cdot \norm{G^\delta(s)}^2_{\cL^2(U,H)}\,ds
      + \int_0^\cdot Y^\delta(s)G^\delta(s)\,dW(s).
    \end{split}
  \end{equation}
  Thanks to the assumptions on $Y$, $f$, $g$ ad $G$, it easily follows
  that, $\P$-a.s.,
  \begin{alignat*}{2}
    Y_0^\delta &\longto Y_0 &&\quad \text{in } H,\\
    Y^\delta(t) &\longto Y(t) &&\quad \text{in } H
    \quad \forall\,t\in[0,T],\\
  f^\delta  &\longto f &&\quad \text{in } L^1((0,T)\times D)^d,\\
  g^\delta &\longto g &&\quad \text{in } L^1((0,T)\times D),\\
  G^\delta &\longto G &&\quad \text{in } L^2(0,T; \cL^2(U,H)).
  \end{alignat*}
  Similarly, using simple properties of Hilbert-Schmidt operators and
  the dominated convergence theorem, it is not difficult to verify
  that the quadratic variation of $(Y^\delta G^\delta- YG) \cdot W$
  converges to zero in probability, so that
  \[
    \int_0^\cdot Y^\delta G^\delta \,dW \longto
    \int_0^\cdot YG \,dW
  \]
  uniformly (with respect to time) in probability.  Furthermore,
  thanks to the hypotheses on $k$ and $j$, the families
  $(\nabla u^\delta\cdot Y^\delta)$ and $(g^\delta Y^\delta)$ are
  uniformly integrable in $(0,T)\times D$ $\P$-a.s., hence by Vitali's
  theorem we also have that, $\P$-a.s.,
  \begin{alignat*}{2}
    f^\delta \cdot \nabla Y^\delta &\longto f \cdot \nabla Y
    &&\quad \text{in } L^1((0,T)\times D),\\
    g^\delta Y^\delta &\longto gY
    && \quad \text{in } L^1((0,T)\times D).
  \end{alignat*}
  The proof is completed passing to the limit as $\delta \to 0$ in
  \eqref{eq:delta}, in complete analogy to \cite[\S~4]{cm:inv} and
  \cite[\S~3]{cm:div}.
\end{proof}

\begin{coroll}
  Under the assumptions of the previous proposition, one has
  \[
    Y \in L^0(\Omega; C([0,T];H)).
  \]
\end{coroll}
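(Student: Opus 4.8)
The plan is to produce a version of $Y$ that is genuinely defined at every time, is weakly continuous in $H$, and has a $\P$-a.s.\ continuous norm, and then to invoke the elementary fact that a weakly continuous $H$-valued map with continuous norm is strongly continuous.

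First I would go back to the regularizations $Y^\delta=(I-\delta\Delta)^{-m}Y$ used in the proof of Proposition~\ref{prop:Ito}. After the smoothing, $\div f^\delta$ and $g^\delta$ lie in $L^1(0,T;H)$ $\P$-a.s., so the regularized identity exhibits $Y^\delta$ as the sum of an absolutely continuous $H$-valued process and a continuous stochastic integral; hence $Y^\delta$ has continuous paths in $H$. The proof of the proposition already gives $Y^\delta(t)\to Y(t)$ in $H$ \emph{for every} $t\in[0,T]$, $\P$-a.s., and we fix this pointwise limit as our representative of $Y$. Passing to the limit in the regularized equation — the drift terms converging in $L^1(0,T;V_0')$ since $\div\colon L^1(D)^d\to V_0'$ is bounded and $f^\delta\to f$, $g^\delta\to g$ in $L^1((0,T)\times D)$, and the stochastic integrals converging uniformly in probability — shows that, $\P$-a.s., this representative satisfies the $V_0'$-valued identity of the proposition for \emph{all} $t$. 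Since the right-hand side of that identity is a continuous $V_0'$-valued process, $t\mapsto\ip{Y(t)}{\phi}$ is continuous for every $\phi\in V_0$; because $Y\in L^\infty(0,T;H)$ $\P$-a.s.\ and $V_0$ is dense in $H$, this upgrades to weak continuity of $Y$ in $H$.

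Next I would rerun the same limit passage in the \emph{Itô} identity rather than the equation: since $\norm{Y^\delta(t)}^2\to\norm{Y(t)}^2$ for all $t$, while $\int_0^\cdot\!\int_D f^\delta\cdot\nabla Y^\delta\to\int_0^\cdot\!\int_D f\cdot\nabla Y$ and $\int_0^\cdot\!\int_D g^\delta Y^\delta\to\int_0^\cdot\!\int_D gY$ uniformly in $t$ (the integrands converging in $L^1((0,T)\times D)$), and the martingale terms converge uniformly in probability, the identity of Proposition~\ref{prop:Ito} holds, $\P$-a.s., for \emph{every} $t\in[0,T]$. Its right-hand side minus the two drift integrals is a continuous real-valued process: the stochastic integral and $\int_0^\cdot\norm{G(s)}^2_{\cL^2(U,H)}\,ds$ are continuous, and $\int_0^\cdot\!\int_D f\cdot\nabla Y$, $\int_0^\cdot\!\int_D gY$ are absolutely continuous because $f\cdot\nabla Y$ and $gY$ belong to $L^1((0,T)\times D)$ $\P$-a.s.\ (by the Fenchel--Young inequality applied with the constants $a$, $b$ from the hypotheses, together with the $\limsup$ conditions on $k$ and $j$ controlling the negative parts). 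Hence $t\mapsto\norm{Y(t)}$ is $\P$-a.s.\ continuous. Combining this with weak continuity, as $s\to t$ one has $\norm{Y(t)-Y(s)}^2=\norm{Y(t)}^2-2\ip{Y(t)}{Y(s)}+\norm{Y(s)}^2\to0$, so $Y\in C([0,T];H)$ $\P$-a.s.; measurability of $\omega\mapsto Y(\omega,\cdot)$ as a $C([0,T];H)$-valued random variable follows in the standard way from adaptedness and path-continuity, giving $Y\in L^0(\Omega;C([0,T];H))$.

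The main obstacle is precisely the mismatch between the ``a.e.\ in $t$'' nature of the hypotheses ($Y$ is only controlled in $L^\infty(0,T;H)$) and the ``for all $t$'' content of the conclusion: merely observing that $\norm{Y(\cdot)}^2$ coincides a.e.\ with a continuous function and is lower semicontinuous only yields an inequality, not continuity. The fix is to carry the \emph{for every $t$} convergence $Y^\delta(t)\to Y(t)$ through both the $V_0'$-equation and the Itô identity, which is why the argument relies on the internal construction in the proof of Proposition~\ref{prop:Ito} rather than on its statement alone.
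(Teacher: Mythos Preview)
Your proposal is correct and follows essentially the same approach as the paper: establish weak continuity of $Y$ in $H$, use the It\^o identity to get continuity of $t\mapsto\norm{Y(t)}$, and combine the two for strong continuity. The only difference is cosmetic---the paper obtains weak continuity by directly invoking the Strauss lemma for $L^\infty(0,T;H)\cap C([0,T];V_0')$, whereas you reconstruct that fact from the regularizations $Y^\delta$.
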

\begin{proof}
  Since $Y \in L^\infty(0,T; H) \cap C([0,T]; V_0')$, the trajectories
  of $Y$ are weakly continuous in $H$ (see,
  e.g.,\cite{Strauss}). Moreover, by It\^o's formula one has
  \begin{align*}
    &\frac12\norm{Y(t)}^2  - \frac12\norm{Y(r)}^2
      + \int_r^t\!\!\int_D f(s) \cdot \nabla Y(s)\,ds
      + \int_r^t\!\!\int_D g(s)Y(s)\,ds \\
    &\hspace{3em} = 
      \frac12 \int_r^t \norm{G(s)}^2_{\cL^2(U,H)}\,ds
      + \int_r^t Y(s)G(s)\,dW(s)
  \end{align*}
  for every $r,\,t\in[0,T]$. This implies, by an argument analogous to
  the one used in \cite[\S~3]{cm:ref}, that the function
  $t \mapsto \norm{Y(t)}$ is continuous on $[0,T]$. By a well-known
  criterion we thus conclude that $Y$ has strongly continuous
  trajectories in $H$.
\end{proof}

\subsection{Well-posedness in a special case}
As a first step we prove existence of solutions to \eqref{eq:0}
assuming that the noise is of additive type and that
\[
  B \in L^2(\Omega; L^2(0,T; \cL^2(U,V_0))).
\]
For any $\lambda>0$, let $\gamma_\lambda$ and $\beta_\lambda$ denote
the Yosida approximations of $\gamma$ and $\beta$, respectively, and
consider the regularized equation
\[
  du_\lambda(t) - \lambda\Delta u_\lambda(t)\,dt
  - \div\gamma_\lambda(\nabla u_\lambda(t))\,dt
  + \beta_\lambda(u_\lambda(t))\,dt =
  B(t)\,dW(t), \qquad u_\lambda(0)=u_0.
\] 
Since $\gamma_\lambda$ and $\beta_\lambda$ are monotone and
Lipschitz-continuous, it is not difficult to check that the operator
\[
  \phi \longmapsto -\lambda\Delta\phi
  - \div\gamma_\lambda(\nabla\phi) + \beta_\lambda(\phi)
\]
is hemicontinuous, monotone, coercive and bounded on the triple
$(H^1_0(D), H, H^{-1}(D))$, so that the classical results by Pardoux
\cite{Pard} provide existence and uniqueness of a variational solution
\[
  u_\lambda \in L^2(\Omega;C([0,T]; H))
  \cap L^2(\Omega;L^2(0,T;H^1_0(D))).
\]
The a priori estimates on the solution $u_\lambda$ contained in the
next lemma can be obtained essentially as in \cite{cm:div, cm:div2,
  cm:luca, luca}.
\begin{lemma}
  \label{lm:est1}
  There exists a constant $N$ independent of $\lambda$ such that
  \begin{align*}
    &\norm{u_\lambda}^2_{L^2(\Omega; C([0,T]; H))}
      + \lambda\norm{\nabla u_\lambda}^2_{L^2(\Omega; L^2(0,T; H))}\\
    &\hspace{3em} + \norm{\gamma_\lambda(\nabla u_\lambda) \cdot%
      \nabla u_\lambda}_{L^1(\Omega\times(0,T)\times D)} 
      + \norm{\beta_\lambda(u_\lambda)u_\lambda}_{L^1(\Omega\times(0,T)\times D)}
      < N
  \end{align*}
  for all $\lambda \in (0,1)$. Furthermore, there exists
  $\Omega' \in \cF$ with $\P(\Omega')=1$ such that, for every
  $\omega \in \Omega'$, there exists a constant $M(\omega)$
  independent of $\lambda$ such that
   \begin{align*}
     &\norm{u_\lambda(\omega)}^2_{C([0,T]; H)}
       + \lambda\norm{\nabla u_\lambda(\omega)}^2_{L^2(0,T; H)}\\
     &\hspace{3em} + \norm{\gamma_\lambda(\nabla u_\lambda(\omega)) \cdot%
       \nabla u_\lambda(\omega)}_{L^1((0,T)\times D)} 
       + \norm{%
       \beta_\lambda(u_\lambda(\omega))u_\lambda(\omega)}_{L^1((0,T)\times D)}
       < M(\omega)
   \end{align*}
   for all $\lambda \in (0,1)$.
\end{lemma}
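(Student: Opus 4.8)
The strategy is to apply Itô's formula for $\norm{u_\lambda}^2$ to the regularized equation and to exploit the monotonicity and sign structure of the nonlinearities. Concretely, I would first apply the classical Itô formula in the triple $(H^1_0(D),H,H^{-1}(D))$ — which is available here since $u_\lambda$ is a genuine variational solution — to obtain, for every $t\in[0,T]$,
\begin{align*}
  \tfrac12\norm{u_\lambda(t)}^2
  &+ \lambda\int_0^t\norm{\nabla u_\lambda(s)}^2\,ds
  + \int_0^t\!\!\int_D \gamma_\lambda(\nabla u_\lambda)\cdot\nabla u_\lambda\,ds
  + \int_0^t\!\!\int_D \beta_\lambda(u_\lambda)u_\lambda\,ds \\
  &= \tfrac12\norm{u_0}^2
  + \tfrac12\int_0^t\norm{B(s)}^2_{\cL^2(U,H)}\,ds
  + \int_0^t u_\lambda(s)B(s)\,dW(s),
\end{align*}
where all four terms on the left-hand side are nonnegative: the two quadratic ones trivially, and the two integrals of $\gamma_\lambda(\nabla u_\lambda)\cdot\nabla u_\lambda$ and $\beta_\lambda(u_\lambda)u_\lambda$ because $\gamma_\lambda=\partial k_\lambda$, $\beta_\lambda=\partial j_\lambda$ are monotone graphs through the origin (recall $k(0)=0$, $j(0)=0$, hence $k_\lambda(0)=j_\lambda(0)=0$ and $\gamma_\lambda(0)\ni0$, $\beta_\lambda(0)\ni0$), so $\gamma_\lambda(x)\cdot x\ge k_\lambda(x)+k_\lambda^*(\gamma_\lambda(x))\ge0$ and similarly for $\beta_\lambda$.

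For the first (expectation) bound I would take expectations, kill the martingale term (after a standard localization argument to justify that its expectation vanishes, using the linear growth of $B$ and $u_\lambda\in L^2(\Omega;C([0,T];H))$), and bound the right-hand side by $\tfrac12\E\norm{u_0}^2 + \tfrac12\E\int_0^T\norm{B(s)}^2\,ds$, which is finite by the hypothesis $B\in L^2(\Omega;L^2(0,T;\cL^2(U,V_0)))$ (together with $u_0\in L^2(\Omega,\cF_0;H)$). This immediately controls $\lambda\norm{\nabla u_\lambda}^2_{L^2(\Omega;L^2(0,T;H))}$ and the two $L^1(\Omega\times(0,T)\times D)$ norms. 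To upgrade $\sup_t\norm{u_\lambda(t)}^2$ from an $L^1(\Omega)$ bound to the $L^2(\Omega;C([0,T];H))$ bound asserted, I would take the supremum over $t$ before taking expectations and apply the Burkholder–Davis–Gundy inequality to the martingale term, estimating its quadratic variation by $\int_0^T\norm{u_\lambda(s)}^2\norm{B(s)}^2_{\cL^2(U,H)}\,ds$ and absorbing the resulting $\varepsilon\,\E\sup_t\norm{u_\lambda(t)}^2$ into the left side — a routine Gronwall-type argument giving a $\lambda$-independent constant $N$.

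For the second (pathwise) statement, the point is that with \emph{additive} noise the stochastic convolution $\int_0^\cdot B(s)\,dW(s)$ does not depend on $\lambda$, so one can argue $\omega$ by $\omega$. I would set $w_\lambda:=u_\lambda - \int_0^\cdot B(s)\,dW(s)$, which solves a pathwise (random) PDE with no stochastic term, rewrite the Itô formula above as an exact deterministic energy identity on the set $\Omega'$ where $s\mapsto B(s)\,dW(s)$ has continuous $H$-valued (indeed $V_0$-valued, by the assumption on $B$) trajectories and $\int_0^T\norm{B(s)}^2_{\cL^2(U,V_0)}\,ds<\infty$ — a full-probability set — and on $\Omega'$ estimate every term by quantities depending only on $\omega$ through $\norm{u_0(\omega)}$ and $\norm{B(\cdot)\,dW(\cdot)(\omega)}_{C([0,T];V_0)}$, again via the nonnegativity of the four left-hand terms and a pathwise Gronwall inequality. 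This yields $M(\omega)$ independent of $\lambda$. The main obstacle, and the only genuinely delicate point, is the rigorous justification of the Itô formula and of the interchange of supremum, expectation, and limit in the localization/BDG step when the only integrability one has on the nonlinear terms is $L^1$; this is exactly the kind of issue handled in detail in \cite{cm:div,cm:div2,cm:luca,luca}, and I would invoke those computations rather than reproduce them.
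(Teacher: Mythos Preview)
Your proposal is correct and follows the same route as the paper, whose proof of this lemma is simply a pointer to \cite[Lemmata~4.3--4.7]{cm:div} and \cite[Lemmata~5.3--5.6]{cm:luca}; the arguments there are exactly the It\^o/BDG energy estimate for the expectation bound and, for the pathwise bound, the substitution $w_\lambda=u_\lambda-\int_0^\cdot B\,dW$ exploiting the $V_0$-regularity of the (additive, $\lambda$-independent) stochastic convolution. The only point you leave implicit is how the cross terms $\gamma_\lambda(\nabla u_\lambda)\cdot\nabla M$ and $\beta_\lambda(u_\lambda)M$ (with $M:=\int_0^\cdot B\,dW$) are absorbed: this uses $\nabla M,\,M\in L^\infty$ together with the superlinearity of $k^*$ and $j^*$ to bound $\abs{\gamma_\lambda(\nabla u_\lambda)}$ and $\abs{\beta_\lambda(u_\lambda)}$ in $L^1$ by a small multiple of $\gamma_\lambda(\nabla u_\lambda)\cdot\nabla u_\lambda$ and $\beta_\lambda(u_\lambda)u_\lambda$ plus an $\omega$-dependent constant---but you are right that this is precisely what the cited computations do.
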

\begin{proof}
  It is an immediate consequence of the (proofs of the)
  \cite[Lemmata~4.3--4.7]{cm:div}, for the part involving $\gamma$,
  and \cite[Lemmata~5.3--5.6]{cm:luca}, for the part involving
  $\beta$.
\end{proof}
Since
\begin{align*}
  k^*(\gamma_\lambda(\nabla u_\lambda))
  \leq k^*(\gamma_\lambda(\nabla u_\lambda))
  + k((I+\lambda\gamma)^{-1}\nabla u_\lambda)
  &= \gamma_\lambda(\nabla u_\lambda) \cdot
  (I+\lambda\gamma)^{-1}\nabla u_\lambda\\
  &\leq \gamma_\lambda(\nabla u_\lambda) \cdot \nabla u_\lambda
\end{align*}
and
\[
  j^*(\beta_\lambda(u_\lambda))
  \leq  j^*(\beta_\lambda(u_\lambda)) + j((I+\lambda\beta)^{-1}u_\lambda)
  = \beta_\lambda(u_\lambda)(I+\lambda\beta)^{-1}u_\lambda
  \leq \beta_\lambda(u_\lambda)u_\lambda,
\]
we infer that the families $(k^*(\gamma_\lambda(\nabla u_\lambda)))$
and $(j^*(\beta_\lambda(u_\lambda)))$ are uniformly bounded in
$L^1(\Omega \times (0,T) \times D)$. Therefore, recalling that $k^*$
and $j^*$ are superlinear, thanks to the de la Vall\'ee-Poussin
criterion and the Dunford-Pettis theorem we deduce that the families
$(\gamma_\lambda(u_\lambda))$ and $(\beta_\lambda(u_\lambda))$ are
relatively weakly compact in $L^1(\Omega \times(0,T) \times D)^d$ and
$L^1(\Omega\times(0,T)\times D)$, respectively. Analogously, the
families $(\gamma_\lambda(u_\lambda(\omega)))$ and
$(\beta_\lambda(u_\lambda(\omega)))$ are relatively weakly compact in
$L^1((0,T)\times D)^d$ and $L^1((0,T)\times D)$, respectively, for
$\P$-a.e. $\omega \in \Omega$.

Let $\Omega'$ be as in the previous lemma and take $\omega \in
\Omega'$. Then we have, along a subsequence $\lambda'$ of $\lambda$
depending on $\omega$,
\begin{alignat*}{2}
  u_{\lambda'}(\omega) &\longto u(\omega)
  &&\quad \text{weakly* in } L^\infty(0,T; H),\\
  \nabla u_{\lambda'}(\omega) &\longto \nabla u(\omega)
  &&\quad \text{weakly in } L^1((0,T)\times D)^d,\\
  \lambda' u_{\lambda'}(\omega) &\longto 0
  &&\quad \text{in } L^2(0,T; H^1_0(D)),\\
  \gamma_{\lambda'}(u_{\lambda'}(\omega)) &\longto \eta(\omega)
  &&\quad \text{weakly in } L^1((0,T)\times D)^d,\\
  \beta_{\lambda'}(u_{\lambda'}(\omega)) &\longto \xi(\omega)
  &&\quad \text{weakly in } L^1((0,T)\times D),
\end{alignat*}
hence, by passage to the weak limit in the regularized equation taking
test functions in $V_0$, we have
\begin{equation}
  \label{eq:pt}
  u - \int_0^\cdot \div \eta(s)\,ds + \int_0^\cdot \xi(s)\,ds =
  u_0 + \int_0^\cdot B(s)\,dW(s).
\end{equation}
Moreover, by the lower semicontinuity of convex integrals, it also
follows that
\[
  k(\nabla u(\omega)) + k^*(\eta(\omega))
  + j(u(\omega)) + j^*(\xi(\omega)) \in L^1((0,T)\times D).
\]
Arguing as in \cite[pp.~27--28]{cm:luca} and
\cite[pp.~18--19]{cm:div}, one can show that the process $u$
constructed in this way is unique in the space
$L^2(\Omega; C([0,T]; H))$.  This ensures in turn that the
convergences of $(u_\lambda)$ to $u$ hold along the entire sequence
$\lambda$, which is independent of $\omega$.  In particular, we have
that
\[
  u_\lambda(\omega) \longto u(\omega) \quad \text{ weakly in } L^2(0,T; H)
  \quad\forall\,\omega\in\Omega'.
\]
Since $(u_\lambda)$ is bounded in $L^2(\Omega\times(0,T)\times D)$, we
deduce that $u_\lambda$ converges weakly to $u$ also in
$L^2(\Omega\times(0,T);H)$.  Hence, by a direct application of Mazur's
lemma, we infer that $u$ is a predictable process with values in $H$.
Unfortunately a similar argument does not apply to $\eta$ and
$\xi$. In fact, by uniqueness of $u$, we can only infer from
\eqref{eq:pt} that $-\div \eta + \xi$ is unique: namely, assume
that $(\eta_i(\omega),\xi_i(\omega))$, $i=1,2$, are weak limits in
$L^1(0,T;L^1(D))^{d+1}$ of
$\bigl(\gamma_\lambda(\nabla
u_\lambda(\omega)),\beta_\lambda(u_\lambda)\bigr)$ along two
subsequences of $\lambda$ (depending on $\omega)$. Then
\[
  \int_0^t \bigl( -\div(\eta_1-\eta_2) + (\xi_1-\xi_2) \bigr)\,ds = 0
  \qquad \forall t \in [0,T],
\]
hence $-\div(\eta_1-\eta_2) + (\xi_1-\xi_2) = 0$, or, equivalently,
$-\div\eta_1 + \xi_1 = -\div\eta_2 + \xi_2$ in $V_0'$ for a.a.
$t \in [0,T]$. However, this allows us to claim, setting
$\eta_\lambda := \gamma_\lambda(\nabla u_\lambda)$ and
$\xi_\lambda := \beta_\lambda(u_\lambda)$, that
\[
  -\div \eta_\lambda + \xi_\lambda \longto -\div \eta + \xi
  \quad \text{ weakly in } L^1(0,T;V_0')
  \quad \forall \omega \in \Omega'
\]
along the whole sequence $\lambda$, thanks to the same uniqueness
argument already used for $u$. In fact, let us set, for notational
convenience,
\begin{align*}
  \Phi: L^1(D)^{d+1} &\longto V_0'\\
  (v,f) &\longmapsto -\div v+f
\end{align*}
and $\zeta_\lambda:=(\eta_\lambda,\xi_\lambda)$,
$\zeta:=(\eta,\xi)$. Note that $\Phi$, being a linear bounded
operator, can be extended to a linear bounded operator from
$L^1((0,T) \times D)^{d+1} \simeq L^1(0,T;L^1(D)^{d+1})$ to
$L^1(0,T;V_0')$, also when both spaces are endowed with the weak
topology. Then $\zeta_\lambda \to \zeta$ weakly in
$L^1((0,T) \times D)^{d+1}$ implies that
$\Phi\zeta_\lambda \to \Phi\zeta$ weakly in $L^1(0,T;V_0')$ for all
$\omega \in \Omega'$. Such a convergence, however, does not allow to
infer that $-\div \eta + \xi$ is predictable as a $V_0'$-valued
process. The reason is that we may certainly find, by Mazur's lemma, a
convex combination of $-\div \eta_\lambda + \xi_\lambda$ converging
strongly to $-\div \eta + \xi$ in $L^1(0,T;V_0')$ for all
$\omega \in \Omega'$, but such a convex combination would depend on
$\omega$, bringing us back to the same problem we are trying to
solve.\footnote{We could just say that $-\div\eta + \xi$ is weakly
  measurable with respect to $\mathscr{F}$ and the Borel
  $\sigma$-algebra of $L^1(0,T;V_0')$. Since this space is separable,
  by Pettis' theorem we also have strong measurability. This
  observation, however, does not seem to imply the desired result.} In
order to show that $-\div \eta + \xi$ is indeed predictable, we are
first going to prove that
\[
  -\div\eta_\lambda + \xi_\lambda \longto -\div \eta + \xi
  \quad \text{ weakly in } L^1(\Omega \times (0,T);V_0').
\]
We have just shown that
\[
  \int_0^T \ip[\big]{\Phi\zeta_\lambda(\omega,t)}{\phi(t)} \,dt
  \longto \int_0^T \ip[\big]{\Phi\zeta_\lambda(\omega,t)}{\phi(t)} \,dt
\]
for all $\phi \in L^\infty(0,T;V_0)$, for all $\omega \in \Omega'$,
where $\ip{\cdot}{\cdot}$ stands for the duality between $V_0'$ and
$V_0''=V_0$. Let $\psi \in L^\infty(\Omega \times (0,T);V_0)$. Then
$\psi(\omega,\cdot) \in L^\infty(0,T; V_0)$ for $\P$-a.e.
$\omega \in \Omega$.
Indeed, the set
\[
  A := \bigl\{ (\omega,t) \in \Omega \times [0,T]: \;
  \norm{\psi(\omega,t)}_{V_0} > \norm{\psi}_{L^\infty(\Omega\times(0,T); V_0)}
  \bigr\}
\]
belongs to $\cF \otimes \mathscr{B}([0,T])$, and, by Tonelli's theorem,
\[
  \abs{A} = \int_\Omega \int_0^T 1_A \,dt\,d\P
  = \int_\Omega \operatorname{Leb}(A_\omega)\,\P(d\omega),
\]
where $\abs{A}$ denotes the measure of $A$ and $A_\omega$ stands for
the section of $A$ at $\omega$, i.e.
\[
  A_\omega := \bigl\{ t \in [0,T]: \; (\omega,t) \in A \bigr\},
\]
which belongs to $\mathscr{B}([0,T])$ for $\P$-a.e.
$\omega \in \Omega$.  Since $|A|=0$, it follows that
$\abs{A_\omega}=0$ for $\P$-a.e. $\omega \in \Omega$. This implies, by
definition of $A$, that $\psi(\omega,\cdot) \in L^\infty(0,T)$ for
$\P$-a.e. $\omega \in \Omega$.  Consequently, we have
\[
  \int_0^T \ip[\big]{\Phi\zeta_\lambda(\omega,t)}{\psi(\omega,t)} \,dt
  \longto \int_0^T \ip[\big]{\Phi\zeta(\omega,t)}{\psi(\omega,t)} \,dt
\]
for $\P$-a.e. $\omega \in \Omega$. To complete the argument it is then
enough to show that the left-hand side, as a subset of $L^0(\Omega)$
indexed by $\lambda$, is uniformly integrable. To this end, we collect
some simple facts about uniform integrability in the following lemma.
\begin{lemma}
  Let $(X,\mathscr{A},m)$ be a finite measure space and $I$ an
  arbitrary index set.
  \begin{itemize}
  \item[\emph{(a)}] Let $(f_i)_{i \in I}$,
    $(g_i)_{i \in I} \subset L^0(X;\erre^n)$ be such that
    $\abs{f_i} \leq \abs{g_i}$ for all $i \in I$ and assume that
    $(g_i)$ is uniformly integrable. Then $(f_i)$ is uniformly
    integrable.
  \item[\emph{(b)}] Let $(f_i) \subset L^0(X;\erre^n)$ be uniformly
    integrable and $\phi \in L^\infty(X;\erre^n)$. Then
    $(\phi \cdot f_i) \subset L^0(X)$ is uniformly integrable.
  \item[\emph{(c)}] Let $F: \erre^n \to \erre$ with $F(0)=0$ be convex and
    superlinear at infinity, and
    $(f_i) \subset L^0(X;\erre^n)$ be such that
    $(F \circ f_i)$ is bounded in $L^1(X)$. Then $(f_i)$
    is uniformly integrable.
  \item[\emph{(d)}] Let $(Y,\mathscr{B},n)$ be a further finite measure
    space. If
    $(f_i) \subset L^0(X \times Y,\mathscr{A} \otimes \mathscr{B},m
    \otimes n;\erre^n)$ is uniformly integrable, then
    $(g_i) \subset L^0(X;\erre^n)$ defined by
    \[
      g_i := \int_Y f_i(\cdot,y)\,n(dy)
    \]
    is uniformly integrable.
  \end{itemize}
\end{lemma}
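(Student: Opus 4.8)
The plan is to prove each of the four parts by reducing to the classical scalar characterisation of uniform integrability via the de la Vall\'ee--Poussin criterion, i.e.\ that a family $(h_i)$ in $L^1(X)$ is uniformly integrable if and only if $\sup_i\int_X \Psi(\abs{h_i})\,dm<+\infty$ for some nondecreasing convex $\Psi:\erre_+\to\erre_+$ with $\Psi(s)/s\to+\infty$, together with the elementary dominated/continuity bounds. For part (a), since $\abs{f_i}\le\abs{g_i}$ pointwise, for any measurable $E$ one has $\int_E\abs{f_i}\,dm\le\int_E\abs{g_i}\,dm$, and $\int_{\{\abs{f_i}>R\}}\abs{f_i}\,dm\le\int_{\{\abs{g_i}>R\}}\abs{g_i}\,dm+R\,m(X)\cdot 0$—more carefully, one splits $\{\abs{f_i}>R\}\subset\{\abs{g_i}>R\}$ is false, so instead I use the $\varepsilon$--$\delta$ form: given $\varepsilon>0$ pick $\delta$ for $(g_i)$; then $m(E)<\delta$ forces $\int_E\abs{f_i}\le\int_E\abs{g_i}<\varepsilon$, and uniform $L^1$-boundedness of $(f_i)$ follows from that of $(g_i)$; equivalently invoke de la Vall\'ee--Poussin with the same $\Psi$ since $\Psi(\abs{f_i})\le\Psi(\abs{g_i})$ by monotonicity of $\Psi$.

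For part (b), write $\abs{\phi\cdot f_i}\le\norm{\phi}_{L^\infty(X)}\abs{f_i}$ pointwise and apply part (a) with $g_i:=\norm{\phi}_{L^\infty(X)}f_i$, noting that scaling a uniformly integrable family by a fixed constant preserves uniform integrability (immediate from either the $\varepsilon$--$\delta$ definition or de la Vall\'ee--Poussin, replacing $\Psi(\cdot)$ by $\Psi(\cdot/\norm{\phi}_{L^\infty})$ if $\phi\neq 0$, the case $\phi=0$ being trivial). For part (c), this is essentially the ``easy'' direction of de la Vall\'ee--Poussin: since $F$ is convex with $F(0)=0$ and superlinear at infinity, the map $s\mapsto \inf_{\abs{x}\ge s}F(x)$ — or more simply $s\mapsto F_*(s):=\inf\{F(x):\abs{x}=s\}$ after convexification — gives a convex nondecreasing superlinear minorant along rays; the cleanest route is to set $\Psi(s):=\inf_{\abs{x}=s}F(x)$, check $\Psi(\abs{f_i}(\cdot))\le F(f_i(\cdot))$ pointwise, observe $\Psi(s)/s\to+\infty$ as $s\to+\infty$ from superlinearity of $F$, replace $\Psi$ by its largest convex nondecreasing minorant (which retains superlinearity), and conclude via de la Vall\'ee--Poussin from $\sup_i\int_X\Psi(\abs{f_i})\,dm\le\sup_i\norm{F\circ f_i}_{L^1(X)}<+\infty$.

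For part (d), fix a nondecreasing convex superlinear $\Psi$ from de la Vall\'ee--Poussin applied to $(f_i)$ on $X\times Y$; then by Jensen's inequality applied to the probability measure $n/n(Y)$ (harmless since $n$ is finite and nonzero; if $n(Y)=0$ the statement is trivial), for a.e.\ $x$,
\[
  \Psi\!\left(\frac{\abs{g_i(x)}}{n(Y)}\right)
  =\Psi\!\left(\frac{1}{n(Y)}\abs[\bigg]{\int_Y f_i(x,y)\,n(dy)}\right)
  \le \frac{1}{n(Y)}\int_Y \Psi\bigl(\abs{f_i(x,y)}\bigr)\,n(dy),
\]
and integrating in $x$ over $X$ and using Tonelli gives $\int_X\Psi(\abs{g_i}/n(Y))\,dm\le \frac{1}{n(Y)}\int_{X\times Y}\Psi(\abs{f_i})\,d(m\otimes n)$, uniformly bounded in $i$; since $s\mapsto\Psi(s/n(Y))$ is again convex nondecreasing superlinear, de la Vall\'ee--Poussin yields uniform integrability of $(g_i)$. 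I expect the only genuinely delicate point to be part (c): one must produce from the merely coordinatewise/Euclidean datum ``$F$ convex, $F(0)=0$, superlinear'' a \emph{scalar} convex nondecreasing superlinear function controlling $\abs{f_i}$ from below by $F\circ f_i$; the convexification step (taking the largest convex minorant of $s\mapsto\inf_{\abs{x}=s}F(x)$ and verifying it stays superlinear, which uses that the original infimum already tends to $+\infty$ faster than linearly) is the one that needs a short but careful argument, whereas (a), (b), (d) are routine.
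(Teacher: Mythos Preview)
Your argument is correct in all four parts. For (a) and (b) you do essentially what the paper does, namely the $\varepsilon$--$\delta$ characterisation combined with the pointwise domination $\abs{\phi\cdot f_i}\le\norm{\phi}_{L^\infty}\abs{f_i}$; for (c) the paper simply invokes the vector-valued de la Vall\'ee--Poussin criterion by reference, whereas you spell out the reduction to a scalar superlinear minorant, which is fine and your convexification step is sound. The one genuine methodological difference is (d): the paper argues directly from the $\varepsilon$--$\delta$ definition by observing that if $m(A)<\delta:=\delta'/n(Y)$ then $m\otimes n(A\times Y)<\delta'$, so $\int_A\abs{g_i}\,dm\le\int_{A\times Y}\abs{f_i}\,d(m\otimes n)<\varepsilon$; you instead pull a de la Vall\'ee--Poussin function $\Psi$ through the $Y$-integral via Jensen. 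Both are standard; the paper's route is slightly more elementary (no convex-analytic machinery needed), while yours has the advantage of being uniform with the rest of your proof and of giving an explicit modulus $\Psi(\cdot/n(Y))$ for the family $(g_i)$.
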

\begin{proof}
  (a) is an immediate consequence of the definition of uniform
  integrability.
  
  \noindent (b) Let $\varepsilon>0$. By assumption, there exists
  $\delta=\delta(\varepsilon)>0$ such that
  \[
    \int_A \abs[\big]{f_i}_{\erre^n}\,dm
    < \frac{\varepsilon}{\norm{\phi}_{L^\infty}}
    \qquad \forall A \in \mathscr{A}, \; m(A)<\delta.
  \]
  Then
  \[
    \int_A \abs[\big]{\phi \cdot f_i}\,dm
    \leq \norm{\phi}_{L^\infty} \int_A
    \abs[\big]{f_i}_{\erre^n}\,dm < \varepsilon.
  \]
  (c) is a variation of the classical criterion by de la
  Vall\'ee-Poussin. A detailed proof (which is nonetheless very close
  to the one in the standard one-dimensional case) can be found in
  \cite{cm:div}.
  
  \noindent (d) Let $\varepsilon>0$. By
  assumption, there exists $\delta'=\delta'(\varepsilon)>0$ such that
  \[
    \int_C \abs[\big]{f_i}_{\erre^n} \,dm \otimes n < \varepsilon \qquad
    \forall C \in \mathscr{A} \otimes \mathscr{B}, \;
    m \otimes n(C) < \delta'.
  \]
  Let $\delta := \delta'/n(Y)$ and $A \in \mathscr{A}$ with
  $m(A)<\delta$. Then
  \[
    \int_A \abs[\bigg]{\int_Y f_i(x,y)\,n(dy)}_{\erre^n}\,m(dx)
    \leq     \int_{A \times Y} \abs[\big]{f_i(x,y)}_{\erre^n}\,m(dx)\,n(dy)
    < \varepsilon
  \]
  because $m \otimes n (A \times Y) = m(A) n(Y) < \delta n(Y) = \delta'$.
\end{proof}
Let us now resume with the main reasoning. Since
\[
  \int_0^T \ip[\big]{\Phi\zeta_\lambda}{\psi} \lesssim
  \norm{\psi}_{L^\infty(\Omega \times (0,T);V_0)} \biggl(
  \int_0^T\!\!\int_D \abs{\eta_\lambda} + \int_0^T\!\!\int_D
  \abs{\xi_\lambda} \biggr),
\]
by parts (a), (b) and (d) of the previous lemma it is sufficient to
show that $(\eta_\lambda)$ and $(\xi_\lambda)$ are
uniformly integrable in $\Omega \times(0,T) \times D$. But this is
true, in view of part (c) of the previous lemma, because
$k^*(\eta_\lambda)$ and $j^*(\xi_\lambda)$ are uniformly bounded in
$L^1(\Omega\times(0,T)\times D)$. Vitali's theorem then yields
\[
  \int_0^T \ip[\big]{\Phi\zeta_\lambda(\omega,t)}{\psi(\omega,t)} \,dt
  \longto \int_0^T \ip[\big]{\Phi\zeta(\omega,t)}{\psi(\omega,t)} \,dt
  \qquad\text{in } L^1(\Omega),
\]
hence, in particular,
\[
  \Phi(\eta_\lambda,\xi_\lambda) \longto \Phi(\eta,\xi)
  \quad \text{ weakly in } L^1(\Omega\times(0,T); V_0').
\]
Furthermore, from the uniform integrability of $(\eta_\lambda)$ and
$(\xi_\lambda)$ in $\Omega\times(0,T)\times D$ it also follows
that, along a subsequence $\mu$ of $\lambda$,
\[
  (\eta_\mu,\xi_\mu) \longto (\bar\eta,\bar\xi)
  \quad \text{ weakly in } L^1(\Omega\times(0,T)\times D)^{d+1},
\]
hence also
\[
  \Phi(\eta_\mu,\xi_\mu)\longto\Phi(\bar\eta, \bar\xi)
  \quad\text{weakly in } L^1(\Omega\times (0,T);V_0').
\]
An application of Mazur's lemma yields, in complete analogy to the
case of $u$, that $\bar{\eta}$ and $\bar{\xi}$ are predictable
processes with values in $L^1(D)^d$ and $L^1(D)$, respectively.  Since
$\mu$ is a subsequence of $\lambda$, by uniqueness of the weak limit
we have that $\Phi(\eta,\xi)=\Phi(\bar\eta, \bar\xi)$, i.e.
\[
  -\div\eta + \xi = -\div\bar{\eta} + \bar{\xi}.
\]
This implies that the identity \eqref{eq:pt} remains true with $\eta$
and $\xi$ replaced by $\bar{\eta}$ and $\bar{\xi}$, respectively. In
other words, modulo relabeling, we can just assume, without loss of
generality, that $\eta$ and $\xi$ in \eqref{eq:pt} are predictable and
that
\[
  (\eta_\lambda, \xi_\lambda) \longto (\eta,\xi) 
  \quad \text{ weakly in } L^1(\Omega\times (0,T)\times D)^{d+1}.
\]
By weak lower semicontinuity and Lemma~\ref{lm:est1}, this also
implies, arguing as in \cite{cm:div, cm:div2, cm:luca, luca}, that
\begin{align*}
  u &\in L^2(\Omega; L^\infty(0,T; H))\cap L^1(\Omega; L^1(0,T; W^{1,1}_0(D))),\\
  \eta &\in L^1(\Omega\times(0,T)\times D)^d,\\
  \xi &\in L^1(\Omega\times(0,T)\times D),\\
  k(\nabla u) + k^*(\eta) = \nabla u \cdot \eta
    &\in L^1(\Omega\times(0,T)\times D),\\
  j(u)+ j^*(\xi) = u \xi &\in L^1(\Omega\times(0,T)\times D).
\end{align*}
In order to show that $\eta \in \gamma(\nabla u)$ and $\xi \in
\beta(u)$ a.e. in $\Omega\times(0,T)\times D$, it suffices to prove,
by the maximal monotonicity of $\gamma$ and $\beta$, that
\begin{equation}
\label{eq:ls}
  \limsup_{\lambda \to 0} \E\int_0^T\!\!\int_D
  \bigl( \eta_\lambda\cdot\nabla u_\lambda + \xi_\lambda u_\lambda \bigr)
  \leq \E\int_0^T\!\!\int_D \bigl(\eta\cdot\nabla u + \xi u \bigr)
\end{equation}
(cf.~\cite[pp.~17-18]{cm:div}). To this purpose, note that the
ordinary It\^o formula and Proposition~\ref{prop:Ito} yield
\[
\frac12 \E\norm{u_\lambda(T)}^2%
+ \E\int_0^T\!\!\int_D%
\bigl(\eta_\lambda \cdot \nabla u_\lambda + \xi_\lambda u_\lambda \bigr) 
= \frac12 \E\norm{u_0}^2
+ \frac12 \E\int_0^T \norm[\big]{B(s)}^2_{\cL^2(U,H)}\,ds
\]
and 
\[
\frac12 \E\norm{u(T)}^2
+ \E\int_0^T\!\!\int_D \bigl( \eta\cdot\nabla u + \xi u \bigr)
= \frac12 \E\norm{u_0}^2 +
\frac12 \E\int_0^T \norm[\big]{B(s)}^2_{\cL^2(U,H)}\,ds,
\]
respectively (the stochastic integrals appearing in both versions of
It\^o's formula are in fact martingales, not just local martingales,
hence their expectation is zero). Since $u_\lambda(T) \to u(T)$ weakly
in $L^2(\Omega;H)$, one has $\E\norm{u(T)}^2 \leq \liminf_{\lambda \to
  0} \E\norm{u_\lambda(T)}^2$, hence, by comparison, \eqref{eq:ls}
follows.

Finally, the strong pathwise continuity (in $H$) of $u$ is an
immediate consequence of the corollary to Proposition~\ref{prop:Ito}.

\begin{rmk}
  Another way to ``restore'' uniqueness of limits for the pair
  $\zeta_\lambda=(\eta_\lambda,\xi_\lambda)$ is to view it as element
  of the quotient space $L^1(D)^{d+1}/M$, where $M:=\ker\Phi$. Note
  that $M$ is a closed subset of $L^1$ (we suppress the superscript as
  well as the indication of the domain just within this remark), as
  the inverse image of the closed set $\{0\}$ through a continuous
  linear map, hence $L^1/M$ is a Banach space. However, working with
  the spaces $L^1(0,T;L^1/M)$ and $L^1(\Omega \times (0,T);L^1/M)$
  present technical difficulties due to the fact that their dual
  spaces are hard to characterize. A bit more precisely, this has to
  do with the fact that the dual of $L^1(0,T;E)$ is $L^\infty(0,T;E')$
  if and only if $E$ has the Radon-Nikodym property. This property is
  enjoyed by reflexive spaces, but not by $L^1$ spaces (see, e.g.,
  \cite{Die:VM}).
\end{rmk}

\subsection{Well-posedness in the general case}
Let us consider now equation \eqref{eq:0} with general additive noise,
i.e. with
\[
  B \in L^2(\Omega; L^2(0,T; \cL^2(U,H))).
\]
Thanks to classical elliptic regularity results, there exists $m \in
\enne$ such that the $(I-\delta\Delta)^{-m}$ is a continuous linear
map from $L^1(D)$ to $W^{1,\infty}(D) \cap H^1_0(D)$ for every
$\delta>0$.  Setting then $V_0:=(I-\Delta)^{-m}(H)$ and
$B^\delta:=(I-\delta\Delta)^{-m}B$, we have $B^\delta \in L^2(\Omega;
L^2(0,T; \cL^2(U,V_0)))$, hence, by the well-posedness results already
obtained, the equation
\[
du^\delta - \div\gamma(\nabla u^\delta)\,dt + \beta(u^\delta)\,dt \ni
B^\delta\,dW, \qquad u^\delta(0) = u_0,
\]
admits a strong solution $(u^\delta, \eta^\delta, \xi^\delta)$.
Arguing as in \cite{cm:div, cm:div2, cm:luca, luca}, one can show
using It\^o's formula that $(u^\delta)$ is a Cauchy sequence in
$L^2(\Omega; C([0,T]; H))$ and that $(\nabla u^\delta)$,
$(\eta^\delta)$, and $(\xi^\delta)$ are relatively weakly compact in
$L^1(\Omega \times(0,T) \times D)$, so that
\begin{alignat*}{2}
  u^\delta &\longto u &&\quad \text{ in } L^2(\Omega; C([0,T]; H)),\\
  u^\delta &\longto u &&\quad \text{ weakly in } 
  L^1(\Omega\times(0,T); W^{1,1}_0(D)),\\
  \eta^\delta &\longto \eta &&\quad\text{ weakly in }
  L^1(\Omega\times(0,T)\times D)^d,\\
  \xi^\delta &\longto \xi &&\quad \text{ weakly in }
  L^1(\Omega\times(0,T)\times D),
\end{alignat*}
from which it follows that $(u,\eta,\xi)$ solves the original
equation.  Moreover, the strong-weak closure of $\beta$ readily
implies that $\xi \in \beta(u)$ a.e. in $\Omega \times (0,T) \times
D$. Finally, arguing as in the previous subsection, by weak lower
semicontinuity of convex integrals and It\^o's formula one can show
that
\[
\limsup_{\lambda \to 0} \E\int_0^T\!\!\int_D \eta_\lambda \cdot 
\nabla u_\lambda 
\leq \E\int_0^T\!\!\int_D \eta \cdot \nabla u,
\]
so that $\eta \in \gamma(\nabla u)$ a.e. in $\Omega \times (0,T)
\times D$ as well.

Continuous dependence on the initial datum is a consequence of It\^o's
formula and the monotonicity of $\gamma$ and $\beta$.  Finally, the
generalization to the case of multiplicative noise follows using the
Lipschitz continuity of $B$ and a classical fixed point argument. A
detailed exposition of the arguments needed to prove these claims can
be found in \cite{cm:div, cm:div2, cm:luca, luca}.

\bibliographystyle{amsplain}
\bibliography{ref}

\end{document}